\newcommand{\RR}{{\rm I\!R}}
\newcommand{\PP}{{\rm I\!P}}
\newcommand{\NN}{{\rm I\!N}}
\newcommand{\ZZ}{{\rm Z\!\!Z}}
\newcommand{\spl}{{\raisebox{0.3ex}{\tiny +}}}
\newcommand{\sml}{{\raisebox{0.3ex}{\tiny -}}}
\newtheorem{theorem}{Theorem}
\newtheorem{remark}{Remark}
\newtheorem{corollary}{Corollary}
\begin{document}
\title{Approximation with Conditionally Positive Definite Kernels on Deficient Sets}
\author{Oleg Davydov\thanks{University of Giessen, Germany,
\texttt{oleg.davydov@math.uni-giessen.de}}}

\maketitle              %

\begin{abstract}
Interpolation and approximation of functionals with conditionally positive definite kernels
is considered on sets of centers that are not determining for polynomials. It is shown that polynomial consistency is
sufficient in order to define kernel-based numerical approximation of the functional with usual properties of optimal
recovery. Application examples include generation of sparse kernel-based numerical differentiation formulas for the 
Laplacian on a grid and accurate approximation of a function on an ellipse.

\end{abstract}

\section{Introduction}
Let $\Omega$ be a set and $P$ a finite dimensional space of functions on $\Omega$. A function $K:\Omega\times\Omega\to\RR$ is
said to be a \emph{conditionally positive definite kernel with respect to $P$} if for any finite set 
$X=\{x_1,\ldots,x_n\}\subset\Omega$ the quadratic form $\sum_{i,j=1}^n c_ic_jK(x_i,x_j)$ is positive
for all $c\in\RR^n\setminus\{0\}$ such that $\sum_{i=1}^n c_ip(x_i)=0$ for all $p\in P$ \cite{Wendland}.

Given data $(x_j,f_j)$, $j=1,\ldots,n$, with $x_j\in \Omega$, $f_j\in\RR$, a sum of the form
\begin{equation}\label{sigma}
\sigma(x)=\sum_{j=1}^n c_j K(x,x_j)+\tilde p,\qquad c_j\in\RR,\quad \tilde p\in P
\end{equation}
can be used to solve the interpolation problem
\begin{equation}\label{intc}
\sigma(x_i)=f_i,\qquad i=1,\ldots,n.
\end{equation}
Moreover, a solution of (\ref{intc}) satisfying the condition
\begin{equation}\label{polyc}
\sum_{j=1}^n c_jp(x_j)=0\quad\hbox{for all } p\in P,
\end{equation}
can always be found \cite[p.~117]{Wendland}. This solution is unique if $X$ is a \emph{determining set} for $P$, that is
$p\in P$ and $p|_X=0$  implies $p=0$.

In meshless finite difference methods, conditionally positive definite kernels with respect to
spaces of polynomials are often used to produce numerical approximations of linear functionals
\begin{equation}\label{numdif}
\lambda f\approx \sum_{i=1}^n w_i f(x_i),\quad w_i\in\RR,
\end{equation}
such as the value $\lambda f=Df(x)$ of a differential operator $D$ applied to a function $f$ at a point 
$x\in\Omega$. 
If the interpolant $\sigma=\sigma_f$ satisfying (\ref{sigma})--(\ref{polyc}) with $f_i=f(x_i)$ is uniquely defined, then the weights $w_i$
of (\ref{numdif}) can be obtained by the approximation $\lambda f\approx\lambda\sigma_f$, which leads to the conditions
\begin{eqnarray}\label{kere}
\sum_{j=1}^n w_j K(x_i,x_j)+\tilde p(x_i)&=&\lambda'K(x_i),\quad i=1,\ldots,n,\quad\hbox{for some }\tilde p\in P,\\
\label{polye}
\sum_{i=1}^n w_i p(x_i)&=&\lambda p\quad\hbox{for all } p\in P,
\end{eqnarray}
where $\lambda'K:\Omega\time\Omega\to\RR$ is the function obtained by applying $\lambda$ to the first argument of $K$.
The weights $w_i$ are in this case uniquely determined by the conditions (\ref{kere})--(\ref{polye}). In particular, by introducing a basis for the space $P$, 
we can write  both (\ref{sigma})--(\ref{polyc}) and (\ref{kere})--(\ref{polye}) as systems of linear equations with the same matrix which is non-singular as soon as 
$X$ is a determining set for $P$. Solving this system is the standard way to obtain the weights $w_i$, 
see e.g.\ \cite{FFprimer15}. It is demonstrated in \cite{BFFB17} that  the weights satisfying (\ref{kere})--(\ref{polye})
for a polyharmonic kernel $K$ significantly improve the
performance of meshless finite difference methods in comparison to the weights obtained by
unconditionally positive definite kernels such as the Gaussian. In addition, these weights provide optimal recovery of
$\lambda f$ from the data $f(x_i)$, $i=1,\ldots,n$, on spaces of functions of finite smoothness, 
see e.g.~\cite[Chapter 13]{Wendland}. 

An alternative interpretation of (\ref{kere})--(\ref{polye}) is that the approximation (\ref{numdif}) of $\lambda f$
is required to be exact for all $f=\sigma$ in the form (\ref{sigma})
with coefficients $c_j$ satisfying (\ref{polyc}). Indeed, this can be easily shown
 with the help of the Fredholm alternative for matrices, see Theorem~\ref{exact} below.
In particular, (\ref{polye}) already expresses exactness of (\ref{numdif}) for all elements of $P$.
In the case when $\Omega$ is a domain in $\RR^d$ and $P$ is 
a space of $d$-variate polynomials, (\ref{polye}) can be used to obtain error bounds for the numerical differentiation 
with weights $w_i$, see e.g.\ \cite{DavySchaback18,DavySchaback19}.

However, exactness (\ref{polye}) for $p\in P$ is sometimes achievable without $X$ being a 
determining set for $P$. We then say that $X$ is \emph{$P$-consistent} for $\lambda$. The best known examples are the Gauss quadrature when $\lambda f=\int_a^bf(x)\,dx$ and the five point
stencil for the two-dimensional Laplacian. Moreover, $P$-consistent sets with $n$ significantly smaller than the 
dimension of $P$ often can be used for the numerical discretization of the Laplace operator on gridded nodes in irregular
domains, leading to sparser differentiation matrices \cite{D19arxiv}.

In this paper we study numerical approximation formulas (\ref{numdif}) obtained by requiring exactness conditions 
(\ref{kere})--(\ref{polye}) on ``deficient'' sets $X$ that are not determining for $P$. 

Our main result 
(Theorem 2 and Corollary 1) shows 
that a unique formula satisfying these conditions exists as soon as $X$ is $P$-consistent. Another consequence is that
the coefficients $c_j$ of the interpolant (\ref{sigma}) are uniquely defined for any $X$ (Corollary 2). Numerical differentiation
formulas obtained in this way  provide optimal recovery on native spaces of the kernels.  We also discuss computational methods for
the weights of the formula (\ref{numdif}) and coefficients of the interpolant (\ref{sigma}). In particular, a null space
method can be used for the saddle point problems  (\ref{kere})--(\ref{polye}) or (\ref{sigma})--(\ref{polyc}) even if in the
case of deficient sets they do not satisfy restrictions usually required in the literature \cite{BGL05}.

In the last section we describe two types of deficient sets that arise naturally in applications. First, deficient 
subsets of a grid may be used for numerical differentiation of the Laplacian (Section 3.1). Second, function values and
differential operators on algebraic surfaces, in this case an ellipse, may  be approximated using data  located on the
manifold, which are necessarily deficient sets for polynomials in the ambient space of degree at least the order of the 
surface (Section 3.2). In both cases, numerical results demonstrate a robust performance of the suggested numerical methods, 
and a reasonable approximation quality of the polyharmonic kernels we employ in the experiments.

\section{Approximation on deficient sets}
We assume that $K:\Omega\times\Omega\to\RR$ is a conditionally positive definite kernel with respect to a linear space $P$ of functions on $\Omega$, 
with $\dim P=m$. Let $\{p_1,\ldots,p_m\}$ be a basis for $P$. By writing $\tilde p=\sum_{j=1}^m v_j p_j$, $v_j\in\RR$,
 conditions (\ref{kere})--(\ref{polye}) give rise to a linear system with respect to $w_j$ and $v_j$, in block matrix form,
\begin{equation}\label{kpem}
\left[ \begin{array}{cc}
K_X & P_X  \\
P_X^T & 0 \end{array} \right] \cdot
\left[ \begin{array}{c}
w \\
v\end{array} \right]= 
\left[ \begin{array}{c}
a \\
b\end{array} \right], 
\end{equation}
where 
$$
K_X=[K(x_i,x_j)]_{i,j=1}^n,\quad P_X=[p_j(x_i)]_{i,j=1}^{n,m},$$
$$
w=[w_j]_{j=1}^n,\quad v=[v_j]_{j=1}^m,\quad
a=[\lambda'K(x_i)]_{i=1}^n,\quad b=[\lambda p_j]_{j=1}^{m}.$$
Condition (\ref{polyc}) in matrix form is
$$
P_X^Tc=0,\quad c=[c_j]_{j=1}^{n},$$
that is $c$ belongs to the null space $N(P_X^T)$ of $P_X^T$. Since 
$$
P_Xv=[\tilde p(x_i)]_{i=1}^{n},\quad \tilde p=\sum_{j=1}^m v_j p_j,$$
we see that the condition that $X$ is a determining set for $P$ is equivalent to
$N(P_X)=0$.

We  show that  the conditions (\ref{kere})--(\ref{polye}) %
express the exactness of (\ref{numdif}) for the sums $\sigma$ conditional on (\ref{polyc}), even when $X$ is 
a \emph{deficient set} for $P$, that is $N(P_X)\ne0$. Recall that this condition is equivalent to $R(P^T_X)\ne\RR^m$,
where $R(A)$ denotes the range of a matrix $A$.

\begin{theorem}\label{exact}   Let $X=\{x_1,\ldots,x_n\}\subset\Omega$. An approximation formula (\ref{numdif})  satisfies the exactness condition
$\lambda \sigma= \sum_{i=1}^n w_i \sigma(x_i)$ for all 
sums $\sigma$ in the form (\ref{sigma}) with coefficients $c_j$ satisfying (\ref{polyc}) if and only if (\ref{kere})--(\ref{polye}) holds
for the weights  $w_i$, $i=1,\ldots,n$. 
\end{theorem}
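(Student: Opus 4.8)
The plan is to rewrite the exactness condition as a linear identity in the coefficients of $\sigma$ and then match it term by term with the block system (\ref{kpem}). Write a general $\sigma$ of the form (\ref{sigma}) as $\sigma(x)=\sum_{j=1}^n c_jK(x,x_j)+\sum_{k=1}^m u_kp_k(x)$, so that it is described by a coefficient vector $c=[c_j]_{j=1}^n$ constrained by $P_X^Tc=0$ together with a free vector $u=[u_k]_{k=1}^m$. Using linearity of $\lambda$ and applying it to the first argument of $K$ gives $\lambda\sigma=c^Ta+b^Tu$, whereas evaluating $\sigma$ at the nodes, since $\sigma(x_i)=(K_Xc)_i+(P_Xu)_i$, gives $\sum_{i=1}^n w_i\sigma(x_i)=w^TK_Xc+w^TP_Xu$. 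Hence the exactness condition $\lambda\sigma=\sum_{i=1}^nw_i\sigma(x_i)$ is equivalent to
\[
c^Ta+b^Tu=w^TK_Xc+w^TP_Xu\qquad\hbox{for all }c\in N(P_X^T)\hbox{ and all }u\in\RR^m.
\]

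Next I would observe that both sides of this identity are linear in the pair $(c,u)$ over the subspace $N(P_X^T)\times\RR^m$, which is the direct sum of $N(P_X^T)\times\{0\}$ and $\{0\}\times\RR^m$; therefore the identity holds if and only if it holds separately in the two cases $u=0$ with $c\in N(P_X^T)$ arbitrary, and $c=0$ with $u\in\RR^m$ arbitrary. The case $c=0$ reduces to $b^Tu=(P_X^Tw)^Tu$ for all $u$, i.e.\ $P_X^Tw=b$, which is exactly (\ref{polye}). The case $u=0$ reduces, using that $K_X$ is symmetric, to $c^T(a-K_Xw)=0$ for all $c\in N(P_X^T)$, that is, $a-K_Xw$ is orthogonal to $N(P_X^T)$; by the Fredholm alternative for matrices, $N(P_X^T)^\perp=R(P_X)$, so this is equivalent to $a-K_Xw=-P_Xv$ for some $v\in\RR^m$, i.e.\ $K_Xw+P_Xv=a$, which is (\ref{kere}) with $\tilde p=\sum_k v_kp_k$. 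Combining the two cases yields the stated equivalence — equivalently, the block system (\ref{kpem}).

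I do not expect a genuine obstacle here: the argument is elementary linear algebra and uses neither the conditional positive definiteness of $K$ nor any hypothesis on $X$, so it applies verbatim to deficient $X$ with $N(P_X^T)\ne0$. The only slightly delicate step is the passage from the orthogonality relation $a-K_Xw\perp N(P_X^T)$ to the solvability statement (\ref{kere}) through the orthogonal-complement identity $N(P_X^T)^\perp=R(P_X)$ — this is precisely where the ``for some $\tilde p\in P$'' clause in (\ref{kere}) is produced. The remaining work is just careful bookkeeping between the function-space quantities $\lambda\sigma$ and $\sigma(x_i)$ and the matrix blocks $K_X$, $P_X$, $a$, $b$, together with the sign conventions used in (\ref{kpem}).
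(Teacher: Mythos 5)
Your proof is correct and follows essentially the same route as the paper's: both reduce the exactness condition to a linear identity in the pair $(c,\tilde p)$ with $c\in N(P_X^T)$, split it into the cases $c=0$ (giving (\ref{polye})) and $\tilde p=0$, and then convert the resulting orthogonality $a-K_Xw\perp N(P_X^T)$ into membership in $R(P_X)$ via the Fredholm alternative, using the symmetry of $K$. The only difference is notational (matrix form versus explicit sums).
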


\begin{proof} The exactness condition is
$$
\sum_{j=1}^n c_j \lambda'K(x_j)+\lambda p= \sum_{j=1}^n c_j\sum_{i=1}^n w_i K(x_i,x_j)+\sum_{i=1}^n w_i p(x_i)$$
for all $c=[c_j]_{j=1}^{n}$ satisfying (\ref{polyc}) and all $p\in P$. In particular, for $c=0$ we obtain (\ref{polye}), and
rewrite the condition as
$$
\sum_{j=1}^n c_j \Big(\lambda'K(x_j)-\sum_{i=1}^n w_i K(x_i,x_j)\Big)=0\quad\hbox{for all } c\in N(P_X^T).$$
By the Fredholm alternative for matrices this is equivalent to
$$
\Big[\lambda'K(x_j)-\sum_{i=1}^n w_i K(x_i,x_j)\Big]_{j=1}^n\in R(P_X),$$
which is in turn equivalent to (\ref{kere}) in view of the symmetry of the
kernel $K$. \qed
\end{proof}

Linear systems of the type (\ref{kpem}) have been extensively studied under the name of equilibrium equations
\cite[Section 4.4.6]{GoVanL96} or saddle point problems \cite{BGL05} because they arise in many application areas.
Our approach below is a variation of the null space techniques described in \cite[Section 6]{BGL05}. However,
usual assumptions that $n\ge m$, $K_X$ is positive semidefinite and $P_X$ has full column rank are not satisfied 
in our case of interest. 

As long as $X$ is a deficient set,  $R(P^T_X)\ne\RR^m$ and hence the solvability of $P^T_Xw=b$  cannot be 
guaranteed for all $b$. Nevertheless, should this last equation
have a solution for $w$, there is a unique weight vector $w$ satisfying (\ref{kpem}).

\begin{theorem}\label{weights} There is a unique vector $w$ satisfying (\ref{kpem}) if and only if $b\in R(P^T_X)$.
\end{theorem}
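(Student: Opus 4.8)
The plan is to read the block system (\ref{kpem}) as the pair of equations $K_Xw+P_Xv=a$ and $P_X^Tw=b$, and to show that the weight vector $w$ is uniquely pinned down (while $v$ is allowed to float) precisely when the second equation is consistent. The ``only if'' direction is then immediate and does not even use uniqueness: if some pair $(w,v)$ satisfies (\ref{kpem}), the lower block gives $P_X^Tw=b$, hence $b\in R(P_X^T)$.

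For the ``if'' direction I would first record the two linear-algebra facts that carry the argument. By the Fredholm alternative for matrices, $R(P_X)=N(P_X^T)^\perp$, the same identity already exploited in the proof of Theorem~\ref{exact}. Secondly, since $K$ is conditionally positive definite with respect to $P$ and $N(P_X^T)$ is exactly the set of $c\in\RR^n$ with $\sum_{i=1}^n c_ip(x_i)=0$ for all $p\in P$, the symmetric matrix $K_X$ is positive definite on the subspace $N:=N(P_X^T)$, i.e.\ $c^TK_Xc>0$ for every $c\in N\setminus\{0\}$. In particular the linear map $z\mapsto\bigl(c\mapsto c^TK_Xz\bigr)$ from $N$ into its dual space is injective (take $c=z$), hence an isomorphism since $\dim N<\infty$.

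Now suppose $b\in R(P_X^T)$ and fix any $w_0$ with $P_X^Tw_0=b$. I would seek the solution as $w=w_0+z$ with $z\in N$, which automatically preserves $P_X^Tw=b$. Solvability of $K_Xw+P_Xv=a$ for some $v$ means $a-K_Xw\in R(P_X)$, which by the Fredholm identity is $a-K_Xw\perp N$, i.e.\ $c^TK_Xz=c^T(a-K_Xw_0)$ for all $c\in N$. The right-hand side is a linear functional on $N$, so the isomorphism from the second fact furnishes a unique $z\in N$ realizing it, and this produces a pair $(w,v)$ solving (\ref{kpem}). For uniqueness of $w$, I would subtract two solutions: the difference $w'$ lies in $N$ and satisfies $K_Xw'+P_Xv'=0$; pairing with $w'$ and using $(w')^TP_X=(P_X^Tw')^T=0$ gives $(w')^TK_Xw'=0$, so $w'=0$ by positive definiteness of $K_X$ on $N$.

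I expect the existence step to be the only genuine obstacle, specifically the verification that the inhomogeneity $a-K_Xw_0$ can be absorbed by a correction $z\in N(P_X^T)$; this is exactly where conditional positive definiteness is indispensable, since it is what makes $K_X$ invertible on $N(P_X^T)$ even though the full matrix in (\ref{kpem}) is singular when $X$ is deficient. The uniqueness computation and the ``only if'' direction are routine. It is worth noting along the way that $v$ is determined only modulo $N(P_X)$, so genuine nonuniqueness of the polynomial part persists for deficient $X$; the theorem asserts uniqueness of $w$ alone, which is all that is needed for the formula (\ref{numdif}).
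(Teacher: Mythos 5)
Your proposal is correct and follows essentially the same route as the paper: the ansatz $w=w_0+z$ with $z\in N(P_X^T)$, positive definiteness of $K_X$ on $N(P_X^T)$ (which the paper phrases as invertibility of $M^TK_XM$ for a null-space basis matrix $M$), and the Fredholm alternative $R(P_X)=N(P_X^T)^\perp$ to recover $v$. The only cosmetic differences are your coordinate-free dual-space formulation in place of the matrix $M$ and your subtract-two-solutions uniqueness argument, which is equivalent to the paper's uniqueness of $u$ in the reduced system.
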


\begin{proof} The necessity of the condition $b\in R(P^T_X)$ is obvious. To show the sufficiency, assume that $P^T_Xw_0=b$ for some $w_0\in\RR^n$. Then the solution $w$
must satisfy $P^T_X(w-w_0)=0$ if it exists, so we look for $w$ in the form
$$
w=w_0+\tilde u,\quad \tilde u\in N(P^T_X).$$
Let $M$ be a matrix whose columns form a basis for $N(P^T_X)$. Then $\tilde u=Mu$ for some vector $u$, 
and we may write (\ref{kpem}) equivalently as a linear system with respect to $u$ and $v$,
\begin{equation}\label{veq}
K_XMu+P_Xv=a-K_Xw_0.
\end{equation}
Since $M^TP_X=0$, it follows that
\begin{equation}\label{ueq}
M^TK_XMu=M^T(a-K_Xw_0).
\end{equation}
Since $K$ is conditionally positive definite, the matrix $M^TK_XM$ is positive definite, and hence there is a unique
$u$ determined by the last equation. The existence of some $v\in\RR^m$ such that
(\ref{veq}) holds is equivalent to the claim that $K_XMu-a+K_Xw_0\in R(P_X)$. This claim 
follows from  the Fredholm alternative since (\ref{ueq}) implies that $K_XMu-a+K_Xw_0\perp N(P^T_X)$. 
Thus, $u$ and $v$ satisfying (\ref{veq}) exist, and $u$ is uniquely determined. Then $w=w_0+Mu$ is a unique 
vector satisfying (\ref{kpem}). \qed
\end{proof}

\begin{remark}\label{weightsg}
Theorem~\ref{weights} is valid for any linear system (\ref{kpem}) with arbitrary matrices $A$ and $B$ replacing $K_X$ 
and $P_X$, respectively, and arbitrary $a,b$, as soon as $A$ is definite on $N(B^T)$, that is $x^TAx\ne0$ for all
$x\in N(B^T)\setminus\{0\}$. Indeed, this condition implies that $M^TAM$ is non-singular and hence the argument in the
proof goes through. %
\end{remark}

As long as the condition $b\in R(P^T_X)$ is satisfied, the weight vector $w$ may be found by any solution method applicable to the system
(\ref{kpem}), for example via the pseudoinverse of its matrix when it is singular. Alternatively, we may use the null space matrix $M$ 
of the above proof and find $w$ from the linear system 
\begin{equation}\label{kpem1}
\left[ \begin{array}{c}
M^TK_X  \\
P_X^T  \end{array} \right] 
w= 
\left[ \begin{array}{c}
M^Ta \\
b\end{array} \right], 
\end{equation}
which is in general overdetermined but
has full rank because its solution $w$ is unique. Indeed, any solution $w$ of (\ref{kpem1}) satisfied (\ref{kpem}) for some
$v$ since $M^TK_Xw=M^Ta$ implies $K_Xw-a\perp N(P^T_X)$ and thus $K_Xw-a\in R(P_X)$. We refer to 
 \cite[Section 6]{BGL05} for the computational methods for the null space matrix $M$. 
 One obvious possibility is to employ the right singular vectors of $P_X$, see 
 \cite[Eq.\ (2.5.4)]{GoVanL96}. Should $v$ be needed, it can be computed
as a solution of the consistent linear system 
\begin{equation}\label{vo}
P_Xv=a-K_Xw. 
\end{equation}
For example we can use
\begin{equation}\label{v}
v=P_X^+(a-K_Xw),
\end{equation} 
where $P_X^+$ denotes the Moore-Penrose pseudoinverse
of $P_X$, is the unique $v$ with the smallest 2-norm.

We formulate two immediate consequences of Theorem~\ref{weights} for the numerical approximation of functionals and for the interpolation. Note that 
(\ref{sigma})--(\ref{polyc}) can be written in the form (\ref{kpem}) with $w$ replaced by $c$, $a=[f_i]_{i=1}^n$, and $b=0$. In particular, the condition
$0\in R(P^T_X)$ of Theorem~\ref{weights} is trivially satisfied.

\begin{corollary}\label{ndw} 
For any $X$ and $\lambda$ there is a unique numerical approximation formula (\ref{numdif}) satisfying 
(\ref{kere})--(\ref{polye}) as soon as (\ref{polye}) is solvable. 
\end{corollary}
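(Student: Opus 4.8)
The plan is to read Corollary~\ref{ndw} off Theorem~\ref{weights} after rephrasing the conditions (\ref{kere})--(\ref{polye}) in the matrix form (\ref{kpem}), which was already set up above. First I would fix the basis $\{p_1,\ldots,p_m\}$ of $P$ and record the elementary correspondence: a weight vector $w$ together with a polynomial $\tilde p=\sum_{j=1}^m v_jp_j\in P$ satisfies (\ref{kere})--(\ref{polye}) if and only if the pair $(w,v)$ solves the block system (\ref{kpem}) with the right-hand side $a=[\lambda'K(x_i)]_{i=1}^n$, $b=[\lambda p_j]_{j=1}^m$. Under this correspondence the existence of \emph{some} formula (\ref{numdif}) satisfying (\ref{kere})--(\ref{polye}) amounts to solvability of (\ref{kpem}), and uniqueness of such a formula amounts to uniqueness of the weight vector $w$ alone.

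Next I would match the hypothesis. Written out, condition (\ref{polye}) is the linear system $P_X^Tw=b$, so the phrase ``(\ref{polye}) is solvable'' means precisely $b\in R(P_X^T)$, which is exactly the hypothesis of Theorem~\ref{weights}. Since $K$ is conditionally positive definite with respect to $P$, that theorem applies to the system (\ref{kpem}) and produces a unique $w$ solving it; translating back through the correspondence of the previous paragraph gives a unique formula (\ref{numdif}) obeying (\ref{kere})--(\ref{polye}). Conversely, if no such formula existed for a given $\lambda$, then (\ref{kpem}) would be unsolvable, which in particular forces $b\notin R(P_X^T)$, i.e.\ (\ref{polye}) not solvable; so the condition is also necessary, although only sufficiency is asserted.

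I do not anticipate a real obstacle here, since the substance is already contained in Theorem~\ref{weights}; what remains is essentially bookkeeping. The one point deserving an explicit sentence is that ``unique numerical approximation formula'' refers to the weights $w_i$ only: the auxiliary polynomial $\tilde p$ appearing in (\ref{kere}) is determined merely up to $N(P_X)$ and hence is not unique on a deficient set, but this ambiguity does not enter (\ref{numdif}) and so is irrelevant to the statement.
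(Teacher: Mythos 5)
Your proposal is correct and follows exactly the route the paper intends: the paper presents Corollary~\ref{ndw} as an immediate consequence of Theorem~\ref{weights}, since (\ref{kere})--(\ref{polye}) is precisely the system (\ref{kpem}) and solvability of (\ref{polye}) is precisely the hypothesis $b\in R(P_X^T)$. Your added remark that uniqueness concerns only the weights $w$, not the auxiliary polynomial $\tilde p$, is a correct and worthwhile clarification consistent with how the paper later treats $v$ via (\ref{vo})--(\ref{v}).
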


\begin{corollary}\label{intw} 
For any data $(x_j,f_j)$, $j=1,\ldots,n$, one or more interpolants $\sigma$ satisfying (\ref{sigma})--(\ref{polyc}) exist and their coefficients $c_j$,
$j=1,\ldots,n$, are uniquely determined.
\end{corollary}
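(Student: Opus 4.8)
The plan is to derive Corollary~\ref{intw} directly from Theorem~\ref{weights}, exactly as the passage preceding it suggests. First I would observe that the interpolation system (\ref{sigma})--(\ref{polyc}), after expanding $\tilde p=\sum_{j=1}^m v_jp_j$ in the chosen basis, has precisely the block form (\ref{kpem}) with the coefficient vector $c$ in place of $w$, right-hand side $a=[f_i]_{i=1}^n$, and $b=0$; the polynomial side condition (\ref{polyc}) is $P_X^Tc=0$, which is the second block row of (\ref{kpem}) with $b=0$. Since $0\in R(P_X^T)$ trivially, Theorem~\ref{weights} applies and yields a unique vector $c$ solving the system.

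Next I would extract the two assertions of the corollary from this. Uniqueness of the coefficients $c_j$ is immediate: they are the first block of the unique solution of (\ref{kpem}) guaranteed by Theorem~\ref{weights}. Existence of at least one interpolant $\sigma$ then follows because, given this $c$, the system (\ref{kpem}) with $w$ replaced by $c$ also supplies a vector $v\in\RR^m$ satisfying the first block row $K_Xc+P_Xv=a$; setting $\tilde p=\sum_{j=1}^m v_jp_j$ and $\sigma(x)=\sum_{j=1}^n c_jK(x,x_j)+\tilde p$ gives $\sigma(x_i)=f_i$ for all $i$ together with $P_X^Tc=0$, i.e.\ (\ref{sigma})--(\ref{polyc}). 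The ``one or more'' in the statement reflects that $v$ (hence $\tilde p$) need not be unique when $X$ is deficient, since $N(P_X)\ne0$; that is precisely why uniqueness is claimed only for the $c_j$ and not for $\sigma$ itself.

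The only point requiring a little care — and the main thing to get right — is the bookkeeping that Theorem~\ref{weights} really does deliver the \emph{whole} solution $(c,v)$ of (\ref{kpem}) and not merely the $c$-part: the theorem is stated as existence and uniqueness of the vector $w$ (here $c$) satisfying (\ref{kpem}), and its proof constructs an accompanying $v$, so one should phrase the argument in terms of the full system. I would also note explicitly that $b=0$ makes the hypothesis $b\in R(P_X^T)$ vacuous, so no assumption on $X$ is needed, which is the reason the corollary holds ``for any data.'' Beyond that the proof is a direct specialization with no real obstacle.

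\begin{proof}
Writing $\tilde p=\sum_{j=1}^m v_jp_j$, the conditions (\ref{sigma})--(\ref{polyc}) with $f_i=f(x_i)$ replaced by the given $f_i$ take the block form (\ref{kpem}) with $w$ replaced by the coefficient vector $c=[c_j]_{j=1}^n$, right-hand side $a=[f_i]_{i=1}^n$ and $b=0$. Indeed, the first block row is $K_Xc+P_Xv=a$, which is $\sigma(x_i)=f_i$, $i=1,\ldots,n$, and the second block row is $P_X^Tc=0$, which is (\ref{polyc}). Since $0\in R(P_X^T)$ trivially, Theorem~\ref{weights} yields a vector $c$ satisfying (\ref{kpem}), unique among all such vectors, together with some $v\in\RR^m$. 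Setting $\tilde p=\sum_{j=1}^m v_jp_j$ and $\sigma(x)=\sum_{j=1}^n c_jK(x,x_j)+\tilde p$ therefore produces an interpolant satisfying (\ref{sigma})--(\ref{polyc}), and the coefficients $c_j$ are uniquely determined by Theorem~\ref{weights}. (The vector $v$, hence $\tilde p$, need not be unique when $N(P_X)\ne0$, so $\sigma$ itself may not be unique.) \qed
\end{proof}
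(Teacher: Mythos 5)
Your proposal is correct and follows exactly the paper's own route: the paper proves this corollary by the one-line observation preceding it, namely that (\ref{sigma})--(\ref{polyc}) is (\ref{kpem}) with $w$ replaced by $c$, $a=[f_i]_{i=1}^n$, $b=0$, so the hypothesis $b\in R(P_X^T)$ of Theorem~\ref{weights} holds trivially. Your additional remarks — that the theorem's proof supplies the accompanying $v$ and that non-uniqueness of $v$ explains the ``one or more'' phrasing — are accurate and consistent with the paper's later use of (\ref{vo})--(\ref{v}).
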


Thanks to Theorem~\ref{exact} we also obtain the property known for the case of a determining set $X$ that the approximation 
$\lambda f\approx\sum_{i=1}^n w_i f(x_i)$ can be found by requiring $\lambda f\approx\lambda\sigma$ for any interpolant
$\sigma$ of Corollary~\ref{intw} with $f_i=f(x_i)$.

Looking specifically at numerical differentiation, consider the case when $\Omega=\RR^d$, $\lambda f=Df(x)$ for a linear
differential operator $D$ of order $k$ and $x\in\RR^d$, and $P=\PP^d_q$, the space of $d$-variate polynomials of total order at most $q$
(that is, total degree at most $q-1$) for some $q\in\NN$. Any kernel $K$ that is conditionally positive definite with respect to $P$
generates a native semi-Hilbert space $F(K,P)$ of functions on $\Omega$ with null space $P$, see e.g.~\cite{Wendland}. By inspecting the
arguments in Section~2 and Lemma~6 of \cite{DavySchaback16}, we see that thanks to Corollary~\ref{ndw}, the optimal recovery property of the weights $w_i$ defined by 
(\ref{kere})--(\ref{polye}) remains valid for deficient sets $X=\{x_1,\ldots,x_n\}$. 
More precisely, %
the worst case error of numerical differentiation formulas on the unit ball of $F_q(K):=F(K,\PP^d_q)$,
$$
E(u):=\sup_{f\in F_q(K)\atop \|f\|_{F_q(K)}\le 1}\big|Df(x)-\sum_{i=1}^n u_i f(x_i)\big|,$$
can be computed as
\begin{equation}\label{wcerror}
\begin{array}{rcl}
  E^2(u)&=& \displaystyle D'D''K(x,x)-\sum_{i=1}^n u_i\big(D'K(x,x_i)+D''K(x_i,x)\big)\\[4pt]
  &&\displaystyle +\sum_{i,j=1}^n u_iu_jK(x_i,x_j).
\end{array}
\end{equation}
The weight vector $w$ has the \emph{optimal recovery} property in the sense that it  satisfies
\begin{equation}\label{optrec}
E(w) =\min\big\{E(u):u\in\RR^n,\; Dp(x)=\sum_{i=1}^n u_i p(x_i)\;\hbox{ for all }p\in \PP^d_q\big\}
\end{equation}
as soon as the mixed partial derivatives of $K$ exist at $(x,x)\in\RR^d\times\RR^d$ up to the order $k$ in each of both
$d$-dimensional variables, and
$X$ is such that there exists a vector $u\in\RR^n$ with polynomial exactness
$$
Dp(x)=\sum_{i=1}^n u_i p(x_i)\;\hbox{ for all }p\in \PP^d_q.$$
We use here $D'$ and $D''$ to indicate when $\lambda f=Df(x)$ acts on the first, respectively, the second argument of $K$.

Note that the equality-constraned quadratic minimization problem (\ref{optrec}) provides an alternative way of computing
the optimal weight vector on a deficient set. %
By Theorem~\ref{weights} we know that its solution $w$
is unique as soon as the feasible region is non-empty.

\section{Examples}
In this section we illustrate Corollaries \ref{ndw} and \ref{intw} on particular examples where deficient sets $X$ seem
useful.

We consider the \emph{polyharmonic kernels} $K_{s,d}:\RR^d\times \RR^d\to\RR$, defined for all real
$s>0$ by $K_{s,d}(x,y)=\varphi_{s}(\|x-y\|_2)$, where
\begin{equation}\label{polyh}
\varphi_{s}(r):=(-1)^{\lfloor s/2 \rfloor +1}
\left\{
\begin{array}{ll}
r^s\log r, &\hbox{ if $s$ is an even integer, }\\ 
r^s,& \hbox{ otherwise. }
\end{array} 
\right.
\end{equation}
The kernel $K_{s,d}$ is conditionally positive definite with respect to $\PP^d_q$ for all
$q\ge\lfloor s/2 \rfloor +1$. We cite \cite{Wendland,DavySchaback19} and references therein 
for further information on these kernels.
If $m=(s+d)/2$ is an integer and  $q$ is chosen equal to $m$, then
the native space $F_m(K_{s,d})$ coincides with the Beppo-Levi space $BL_{m}(\RR^d)$, see
\cite[Theorem 10.43]{Wendland}. For any $q\ge \lfloor s/2 \rfloor +1$, the space $F_q(K_{s,d})$ can be described 
with the help of the generalized Fourier transforms as in \cite[Theorem 10.21]{Wendland}. 
By the arguments in Section~2, formulas (\ref{wcerror}) and (\ref{optrec}) apply to  $K_{s,d}$ as soon as $s>2k$, where $k$ is
the order of the differential operator $D$.

\subsection{Numerical differentiation of Laplacian on a grid}\label{Lgrid}
We are looking for numerical differentiation formulas of the type
\begin{equation}\label{ndL}
\Delta f(0)\approx \sum_{\alpha\in Z_{d,r}}w_\alpha f(\alpha),
\quad Z_{d,r}:=\{\alpha\in\ZZ^d:\|\alpha\|_2\le r\},\quad r>0,
\end{equation}
where $\Delta$ is the Laplacian
$\Delta f=\sum_{i=1}^d{\partial^2 f}/{\partial x_i^2}$. 
The set $Z_{d,r}$ for $0\le r<1$ consists of the origin only and hence is not useful for the approximation of the Laplacian.
For $r=1$ we have
$Z_{d,1}=\{0,\pm e_1,\ldots \pm e_d\}$,
where $e_i$ is the $i$-th unit vector in $\RR^d$,
and
\begin{equation}\label{star}
\Delta f(0)\approx -2df(0)+\sum_{i=1}^df(e_i)+\sum_{i=1}^df(-e_i)
\end{equation}
is the classical numerical differentiation formula exact for all cubic polynomials $f=p\in\PP^d_4$. Hence 
(\ref{polye}) is solvable for all $X=Z_{d,r}$, $r\ge1$, if $P=\PP^d_4$.

According to Corollary \ref{ndw}, we have computed the unique weights of the formula (\ref{ndL}) 
satisfying (\ref{kere})--(\ref{polye})
for the kernel $K_{7,d}$, $P=\PP^d_4$ and $X=Z_{d,r}$
for all $d=2,\ldots,5$ and $r=1,\sqrt{2},\sqrt{3},2$. As a basis for $\PP^d_4$ we choose ordinary monomials. However,
the computation is performed using the rescaling of $X$ as $X/r$ according to the suggestion in \cite[Section 6.1]{DavySchaback18}

\begin{table}[h!]%
\label{Lgridt}
\caption{Numerical differentiation of Laplacian on a grid: $|X|$ is the cardinality of $X$, $dN=\dim N(P_X)$, $dNt=\dim N(P_X^T)$, $E(w)$
is given by (\ref{wcerror}), and $cond$ is the condition number of the matrix of 
(\ref{kpem1}).} 
\begin{center}
\begin{tabular}{|c@{\enskip}|@{\enskip}r@{\enskip}|@{\enskip}r@{\enskip}|%
                @{\enskip}r@{\enskip}|@{\enskip}r@{\enskip}|@{\enskip}r@{\enskip}|@{\enskip}c@{\enskip}|}%
\hline
\multicolumn{7}{|c|}{\rule{0pt}{12pt}$d=2$, $X=Z_{2,r}$, $\dim \PP^2_4=10$}\\[2pt]
\hline\rule{0pt}{12pt}
$r$ & $|X|$ & $dN$ & $dNt$ & $E(w)$ & $\|w\|_1$ & $cond$ \\[2pt]
\hline\rule{0pt}{12pt}
1          &  5 &  5 & 0 & 13.4 & 8.0   & - \\
$\sqrt{2}$ &  9 &  2 & 1 & 10.6 & 13.5  & 2.0e\spl02 \\
$\sqrt{3}$ &  9 &  2 & 1 & 10.6 & 13.5  & 2.0e\spl02  \\
2          & 13 &  0 & 3 & 7.4  & 11.8  & 3.9e\spl02 \\[2pt]
\hline
\hline
\multicolumn{7}{|c|}{\rule{0pt}{12pt}$d=3$, $X=Z_{3,r}$, $\dim \PP^3_4=20$}\\[2pt]
\hline\rule{0pt}{12pt}
$r$ & $|X|$ & $dN$ & $dNt$ & $E(w)$ & $\|w\|_1$ & $cond$ \\[2pt]
\hline\rule{0pt}{12pt}
1  & 7  &  13   &  0  &   17.2    &    12.0 & - \\
$\sqrt{2}$ & 19   &   4    &  3    &  12.3   &  22.7   &   3.8e\spl02   \\
$\sqrt{3}$ & 27   &  3  &  10  &   12.4  &  24.8  &  2.5e\spl03   \\
2          & 33    &  0   &  13   &   9.0   &   30.1   &  5.1e\spl03 \\[2pt]
\hline
\hline
\multicolumn{7}{|c|}{\rule{0pt}{12pt}$d=4$, $X=Z_{4,r}$, $\dim \PP^4_4=35$}\\[2pt]
\hline\rule{0pt}{12pt}
$r$ & $|X|$ & $dN$ & $dNt$ & $E(w)$ & $\|w\|_1$ & $cond$ \\[2pt]
\hline\rule{0pt}{12pt}
1  & 9   &   26   &    0   &    20.8     &     16.0  & - \\
$\sqrt{2}$ & 33   &  8  &   6  &    14.0  &  31.8  &  5.7e\spl02  \\
$\sqrt{3}$ &  65  &   4  &  34  &   13.9 &  39.7  &  6.9e\spl03  \\
2          & 89  &   0  &  54   &  10.4  &  40.5  &   3.1e\spl04  \\[2pt]
\hline
\hline
\multicolumn{7}{|c|}{\rule{0pt}{12pt}$d=5$, $X=Z_{5,r}$, $\dim \PP^5_4=56$}\\[2pt]
\hline\rule{0pt}{12pt}
$r$ & $|X|$ & $dN$ & $dNt$ & $E(w)$ & $\|w\|_1$ & $cond$ \\[2pt]
\hline\rule{0pt}{12pt}
1  & 11  &   45  &     0 &    24.2   &      20.0 & - \\
$\sqrt{2}$ & 51  &  15   &  10  &  15.6  &   40.9  &   7.7e\spl02  \\
$\sqrt{3}$ & 131  &    5  &    80  &   15.4  &   56.4  &    1.3e\spl04   \\
2          & 221  &   0  &  165  &  11.7  &  55.0  &   9.0e\spl04  \\[2pt]
\hline
\end{tabular}
\end{center}
\end{table}

Table~1 presents information about the 
size $|X|$ of $X$, dimensions of the null spaces of $P_X$ and $P_X^T$,  the optimal recovery error (\ref{optrec})
on $F_4(K_{7,d})$, the stability constant of the weight vector $\|w\|_1=\sum_{i=1}^n|w_i|$, and the condition number $cond$
of the system (\ref{kpem1}) we solved in order to compute the weights for $r\ne1$. A smaller optimal recovery error indicates
better approximation quality, whereas $\|w\|_1$ and $cond$ measure the numerical stability of the formulas.
Note that $\dim N(P_X^T)=0$ for $r=1$, which means that (\ref{star}) is the only
solution of (\ref{polye}) in this case, and hence it provides the optimal recovery on $F_4(K_{7,d})$. 
For $r=2$ we have $\dim N(P_X)=0$ and it follows that $Z_{d,2}$ is a determining set for $\PP^d_4$. For $r=\sqrt{2},\sqrt{3}$ we obtain
examples of optimal recovery weights on deficient sets, with $\dim N(P_X)$ being the dimension
of the affine space of weight vectors satisfying the polynomial exactness condition (\ref{polye}). 
These new weights seem to provide a meaningful choice for the two intermediate sets 
between the classical polynomial stencil on  $Z_{d,1}$, and the standard polyharmonic weights on the determining set 
$Z_{d,2}$. Indeed, as expected, the optimal recovery error $E(w)$ reduces when $|X|$ increases, whereas the stability 
constant and condition numbers tend to increase. 

\subsection{Interpolation of data on ellipse}\label{ellipse}

In this example we compute the kernel interpolant (\ref{sigma}) satisfying (\ref{polyc}) and $\sigma(x_i)=f(x_i)$, $i=1,\ldots,n$, 
for the test function $f:\RR^2\to\RR$ given by
$$
f(x,y)=\sin(\pi x)\sin(\pi y).$$
We use the polyharmonic kernels $K_{s,2}$ and $P=\PP^2_q$ for the pairs
$$
(s,q)=(5,3),(7,4),(9,5),$$
and choose sets $X$ with $n=|X|=5\cdot 2^i$, $i=0,1,\ldots 6$, on the ellipse $\cal E$ with half-axes $a=1$ and $b=0.75$ centered at the origin.
The sets are obtained by first choosing parameter values $t_i=ih$, $i=0,\ldots,n-1$, where $h=2\pi/n$, then adding to each
$t_i$ a random number $\epsilon_i$ with uniform distribution in the interval $[-0.3h,0.3h]$, and selecting 
$x_i=\big(a\cos(t_i+\epsilon_i),b\sin(t_i+\epsilon_i)\big)$. The first two sets used in our experiments are shown in Fig.~1.

\begin{figure}[htbp!]
\begin{center}
\hspace*{-10pt}\includegraphics[width=120pt]{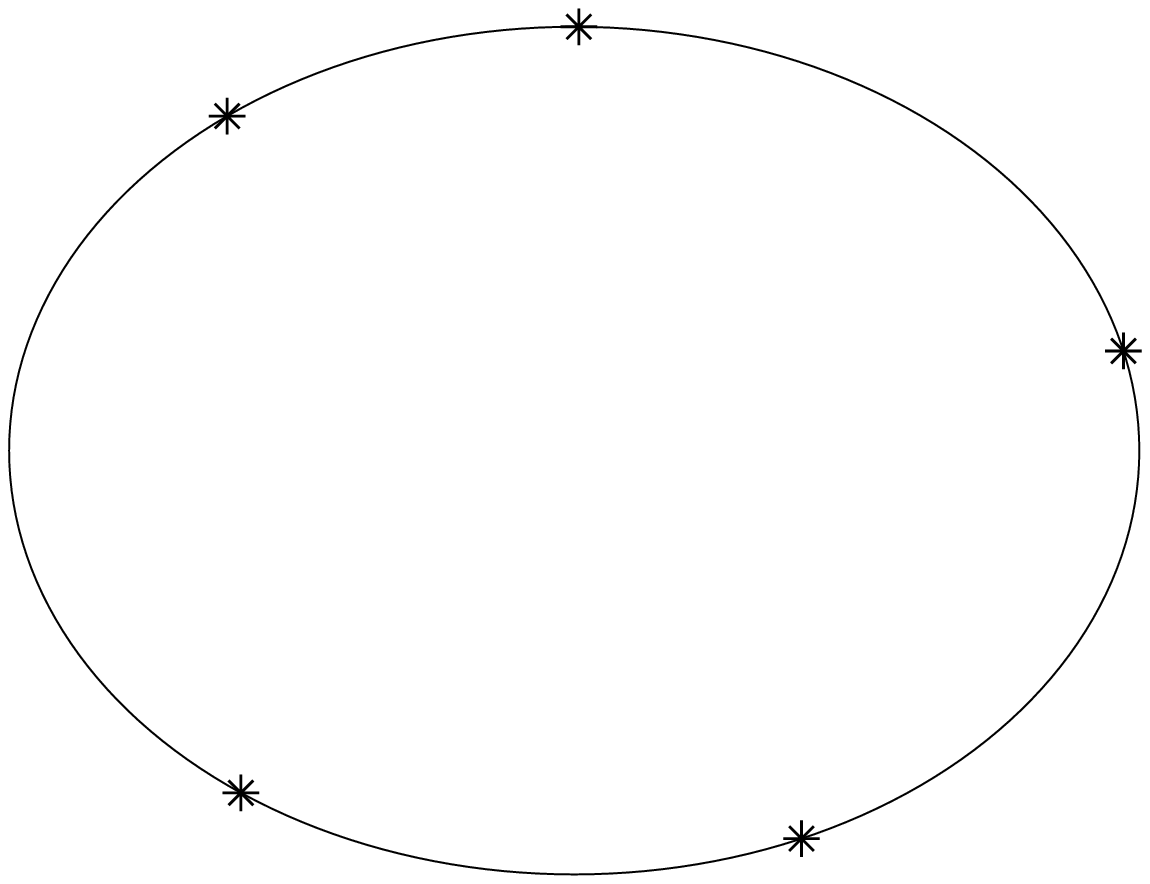}\qquad\qquad\includegraphics[width=120pt]{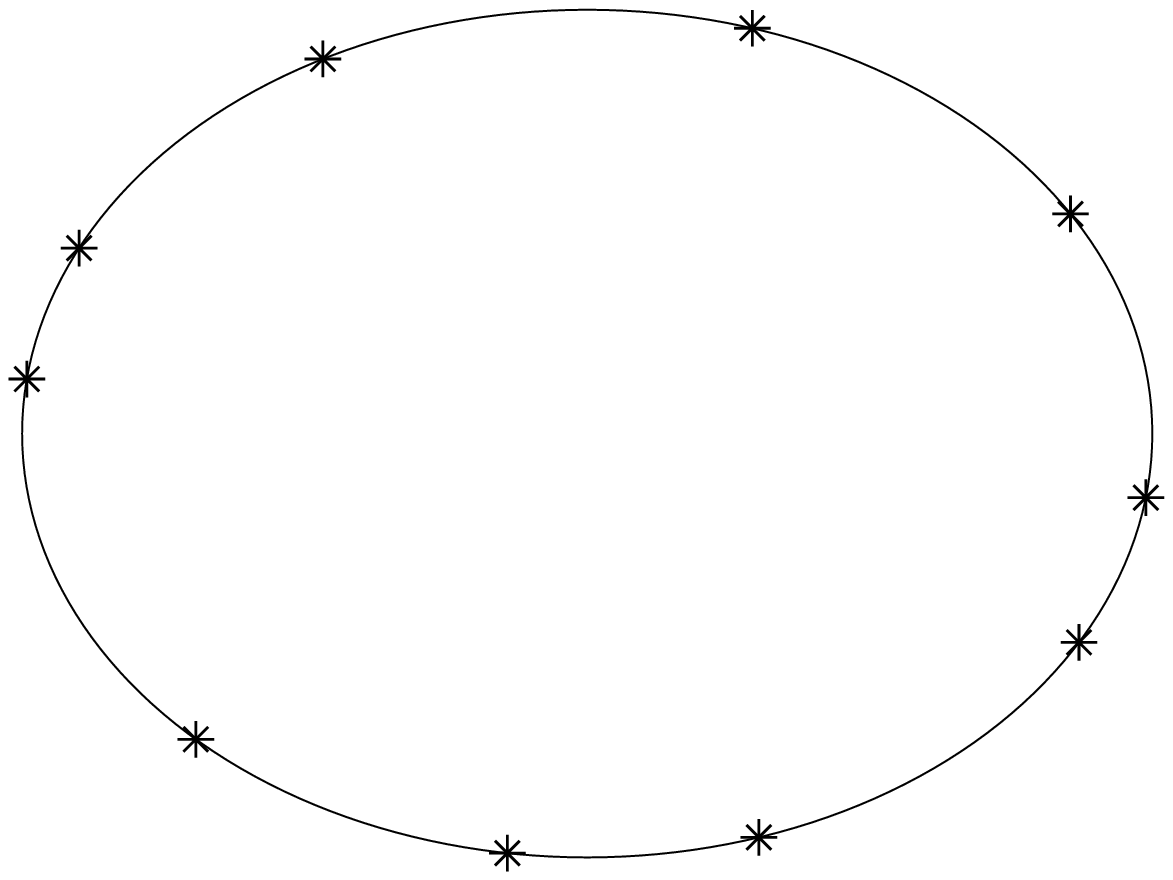}
\end{center}
\caption{Interpolation of data on ellipse: The sets $X$ with $|X|=5$ (left) and 10 (right).}
\end{figure}

Since $X\subset\cal E$ and there exists a nontrivial quadratic polynomial $p\in \PP^2_3$
that vanishes on $\cal E$, 
$$
p(x,y)=x^2/a^2+y^2/b^2-1,$$
all sets $X$ are deficient for $\PP^2_q$, $q\ge3$. Nevertheless, 
according to Corollary~\ref{intw}, the coefficients $c_j$ of the interpolant $\sigma$ in (\ref{sigma}) 
are uniquely determined and can be computed by solving the system (\ref{kpem1}). A polynomial 
$\tilde p$ of (\ref{sigma}), $\tilde p=v_1p_1+\cdots +v_m p_m$, can be computed by solving (\ref{vo}). We will use the
pseudoinverse as in (\ref{v}), but in fact the polynomial $\tilde p$ is uniquely determined on the ellipse $\cal E$ as soon as
$n\ge 2q-1$. Indeed, if both $\tilde p_1,\tilde p_2\in\PP^2_q$ satisfy (\ref{vo}), and 
$\tilde p_1-\tilde p_2=u_1p_1+\cdots +u_m p_m$, then $P_Xu=0$, which implies $(\tilde p_1-\tilde p_2)|_X=0$. Hence
$x_1,\ldots,x_n$ are intersection points of the ellipse and the zero curve of $\tilde p_1-\tilde p_2$, an algebraic curve of order 
$q-1$. By Bezout theorem, this curve must contain $\cal E$ as soon as $n>2(q-1)$, which implies
$\tilde p_1|_{\cal E}=\tilde p_2|_{\cal E}$. 

Thus, $\sigma|_{\cal E}$ is well defined as soon as $|X|\ge5$ for $q=3$, 
$|X|\ge7$ for $q=4$ and $|X|\ge9$ for $q=5$. We are using $\sigma(x)$ as an approximation of $f(x)$ for $x\in\cal E$.
Moreover, we also approximate the surface gradient 
$$
\nabla_{\cal E}f(x):=\nabla f(x)-\nabla f(x)^T\nu(x)\cdot\nu(x),\quad x\in \cal E,$$ 
where $\nu(x)$ is the unit outer normal to $\cal E$ at $x$. The surface gradient $\nabla_{\cal E}f(x)$ can either be
approximated by $\nabla_{\cal E}\sigma(x)$, or by using a numerical differentiation formula (\ref{numdif}), 
with the same result. For each $X$, except of $|X|=5$ for $q=4,5$, we evaluated the maximum error of the function
and surface gradient, 
\begin{eqnarray*}
max&=&\max_{x\in\cal E}|f(x)-\sigma(x)|,\\
maxg&=&\max_{x\in\cal E}\|\nabla_{\cal E}f(x)-\nabla_{\cal E}\sigma(x)\|_2,
\end{eqnarray*}
by sampling the parameter $t$ of the ellipse $\big(a\cos t,b\sin t\big)$, $t\in[0,2\pi)$, equidistantly 
with the step $h/20$. The results are presented in Table 2, where we also included
the condition number $cond$ of the system (\ref{kpem1}). Note that we translate and scale $X$ using its center of gravity 
$z$, and perform the computations with $K_{s,2}$ and ordinary monomials on the set 
$Y=(X-z)/\max\{\|x_i-z\|_2:i=1,\ldots,n\}$, in order to improve the condition numbers.
The results in the table demonstrate a fast convergence of the interpolant $\sigma$ and its surface gradient to $f$ and 
$\nabla_{\cal E}f$. Note that although the condition numbers become high when the set $X$ fills the ellipse more densely,
they are moderate in comparison to significantly worse conditioned matrices arising if 
infinitely smooth kernels such as the Gaussian $K_{G,\varepsilon}(x-y)=\exp(-\varepsilon\|x-y\|_2^2)$, $\varepsilon>0$, 
are employed, see also discussions in \cite[Section 5.1.5]{FFprimer15}.

\begin{table}[htbp!]
\label{ellipset}
\caption{Interpolation of a test function on an ellipse using the kernel $K_{s,2}$ and $P=\PP^2_q$: $|X|$ is the cardinality of $X$, 
$max$ and $maxg$ are the maximum errors of the function or  the surface gradient, respectively, and $cond$ is the condition number of the matrix of 
(\ref{kpem1}).} 
\begin{center}
\begin{tabular}{|r@{\enskip}|@{\hspace{3pt}}c@{\hspace{3pt}}|@{\hspace{3pt}}c@{\hspace{3pt}}|@{\hspace{3pt}}c@{\hspace{3pt}}|%
                @{\hspace{3pt}}c@{\hspace{3pt}}|@{\hspace{3pt}}c@{\hspace{3pt}}|@{\hspace{3pt}}c@{\hspace{3pt}}|%
                @{\hspace{3pt}}c@{\hspace{3pt}}|@{\hspace{3pt}}c@{\hspace{3pt}}|@{\hspace{3pt}}c@{\hspace{3pt}}|}%
\hline
&\multicolumn{3}{c|}{\rule{0pt}{12pt}$s=5$, $q=3$}&\multicolumn{3}{c|}{\rule{0pt}{12pt}$s=7$, $q=4$}
&\multicolumn{3}{c|}{\rule{0pt}{12pt}$s=9$, $q=5$}\\[2pt]
\hline\rule{0pt}{12pt}
$|X|$ & $max$ & $maxg$  & $cond$ & $max$ & $maxg$  & $cond$ & $max$ & $maxg$  & $cond$\\[2pt]
\hline\rule{0pt}{12pt}
5  &  8.3e\sml01   &   2.5e\spl00  &    4.7e\spl00 & -    &        -   &  - & -    &   -  &   -\\
10 & 2.5e\sml01  &  9.9e\sml01  &  3.0e\spl02 & 1.8e\sml01  &   7.9e\sml01  &   1.9e\spl02 & 1.6e\sml01  &   6.9e\sml01 &    2.3e\spl02\\
20 & 2.9e\sml03  &  2.0e\sml02  &  2.3e\spl04 & 1.6e\sml03  &   1.1e\sml02  &   9.8e\spl04 & 1.6e\sml03  &   1.5e\sml02  &   2.0e\spl05\\
40 & 4.6e\sml05  &  6.9e\sml04  &  2.5e\spl06 & 2.2e\sml06  &   3.6e\sml05  &   5.3e\spl07 & 6.1e\sml07  &   1.4e\sml05  &   6.3e\spl08\\
80 &  1.0e\sml06 &   3.1e\sml05 &   2.4e\spl08 & 1.4e\sml08  &   3.5e\sml07  &   2.3e\spl10 & 4.2e\sml10  &   1.5e\sml08  &   1.2e\spl12\\
160 &  2.2e\sml08  &  1.2e\sml06  &  1.2e\spl10 & 7.5e\sml11  &   4.2e\sml09  &   4.8e\spl12 & 8.1e\sml12 &    1.8e\sml09  &   4.6e\spl15\\
320 &   2.7e\sml10 &   3.1e\sml08  &  9.2e\spl11 &  1.0e\sml12 &    2.2e\sml10 &   2.9e\spl15 & 9.1e\sml12 &   4.9e\sml10 &   5.8e\spl17\\[2pt]
\hline
\end{tabular}
\end{center}
\end{table}

\bibliographystyle{abbrv} %
\bibliography{bibl}

\end{document}